
%


\documentclass[preprint,12pt]{imsart}

\usepackage{amsthm,amsmath,natbib}
\usepackage{amsfonts}
\usepackage{amssymb}

\RequirePackage[colorlinks,citecolor=blue,urlcolor=blue]{hyperref}

\usepackage{tikz-cd}

\usepackage[a4paper,includeheadfoot,margin=2.54cm]{geometry}


\startlocaldefs
\newcommand{\vekk}[1]{}
\input{def.tex}
\theoremstyle{plain}

\theoremstyle{remark}

\endlocaldefs

\begin{document}

\begin{frontmatter}

  \title{Optimal Learning\\ from the Doob-Dynkin lemma}
  
\runtitle{The Doob-Dynkin lemma}

\begin{aug}
\author{\fnms{Gunnar} \snm{Taraldsen}\corref{}\ead[label=e1]{Gunnar.Taraldsen@ntnu.no}}

\address{Trondheim, Norway.
\printead{e1}}

\runauthor{Taraldsen}

\affiliation{Norwegian University of Science and Technology}

\end{aug}

\begin{abstract}
  The Doob-Dynkin Lemma gives conditions on two functions
  $X$ and $Y$ that ensure existence of a function $\phi$
  so that $X = \phi \circ Y$.
  This communication proves different versions of the Doob-Dynkin Lemma,
  and shows how it is related to optimal statistical learning algorithms.
\end{abstract}

\begin{keyword}
\kwd{Improper prior}
\kwd{Descriptive set theory}
\kwd{Conditional Monte Carlo}
\kwd{Fiducial}
\kwd{Machine learning}
\kwd{Complex data}
\end{keyword}




\tableofcontents

\end{frontmatter}

\section{Introduction}
\label{sIntro}

This note is motivated by the commutative diagram
\begin{equation}
\label{eqComm}
{\Large
\begin{tikzcd}[row sep=tiny]
& \Omega_X  \\
\Omega \arrow[ur, "X"] \arrow[dr, "Y"'] & \\
& \Omega_Y \arrow[uu, "\phi"']
\end{tikzcd}
}
\end{equation}
which is of fundamental importance 
in probability, statistics, and data science. 
If $Y$ is the data in an experiment,
then $X$ is also data by definition
if $X = \phi(Y)$.
If $Y$ and $\phi$ are measurable, 
then it follows as a consequence that the composition $X = \phi(Y)$ is measurable.
The Doob-Dynkin lemma
\citep[p.603]{DOOB}
\citep[p.7]{Kallenberg02probability}
gives conditions on $X$ and $Y$ that ensures
existence of a $\phi$ such that $X = \phi (Y)$.
In the next section we prove different versions of the Doob-Dynkin lemma, 
and in the final section we briefly discuss the role of the Doob-Dynkin lemma in statistics.
The lemma provides in particular existence and uniqueness of optimal data learning algorithms.

\newpage

\section{The Doob-Dynkin lemma}
\label{sDoobDynkin}

Consider first the case where $X: \Omega \into \Omega_X$ and
$Y: \Omega \into \Omega_Y$ are continuous functions between topological spaces
\citep{KURATOWSKI,KELLEY}.
The following Lemma is probably known in some context,
but I have no reference for this.
A similar comment holds for many other results presented in the following.
\begin{Lemma}[Topological Doob-Dynkin]
  If the image $X(\Omega)$ is a $T_0$ space,
  and $X$ is continuous with respect to the initial topology of $Y$,
  then there exists a unique continuous
  $\phi: Y(\Omega) \into \Omega_X$ such that
  $X = \phi (Y)$.
\end{Lemma}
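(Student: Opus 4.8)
The plan is to define $\phi$ by the only possible prescription, $\phi(Y(\omega)) = X(\omega)$, and then to verify in turn that this is well defined, that it is continuous, and that it is the unique factorisation. Uniqueness is the easiest point and I would dispose of it first: any $\psi : Y(\Omega) \to \Omega_X$ satisfying $X = \psi \circ Y$ must agree with the above prescription on every point $y = Y(\omega)$ of $Y(\Omega)$, so the factoring map is forced pointwise and no continuity argument is needed for this part.

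The crux is well-definedness, and this is where both hypotheses enter. First I would record the meaning of the initial-topology assumption: for every open $V \subseteq \Omega_X$ the set $X^{-1}(V)$ is open in the initial topology generated by $Y$, hence of the form $Y^{-1}(W)$ for some open $W \subseteq \Omega_Y$ (a union of basic sets $Y^{-1}(U)$ is again $Y^{-1}$ of a union). Now suppose $Y(\omega_1) = Y(\omega_2)$. For any open $V$, writing $X^{-1}(V) = Y^{-1}(W)$, membership of $\omega_i$ in $X^{-1}(V)$ depends only on whether $Y(\omega_i) \in W$, so $\omega_1 \in X^{-1}(V)$ exactly when $\omega_2 \in X^{-1}(V)$. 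Equivalently, $X(\omega_1)$ and $X(\omega_2)$ lie in precisely the same open subsets of $\Omega_X$, i.e. they are topologically indistinguishable in $X(\Omega)$. Since $X(\Omega)$ is $T_0$ this forces $X(\omega_1) = X(\omega_2)$, so $\phi(y) := X(\omega)$ for any $\omega$ with $Y(\omega) = y$ is a well-defined map $Y(\Omega) \to \Omega_X$ with $X = \phi \circ Y$.

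Continuity of $\phi$ then falls out of the same representation. Give $Y(\Omega)$ the subspace topology inherited from $\Omega_Y$. For open $V \subseteq \Omega_X$ choose $W$ with $X^{-1}(V) = Y^{-1}(W)$; I would then verify the identity $\phi^{-1}(V) = W \cap Y(\Omega)$ by chasing a point $y = Y(\omega)$ through the chain $\phi(y) \in V \iff \omega \in X^{-1}(V) = Y^{-1}(W) \iff y \in W$. The right-hand side is open in the subspace topology, so $\phi$ is continuous.

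I expect the only genuine obstacle to be the well-definedness step, specifically the interplay between the two hypotheses: the initial-topology condition is exactly what guarantees that points identified by $Y$ cannot be separated by any set $X^{-1}(V)$, while the $T_0$ condition on $X(\Omega)$ is exactly what upgrades \emph{not separated} to \emph{equal}. Dropping either hypothesis breaks the argument, so the write-up should make transparent that both conditions are genuinely used.
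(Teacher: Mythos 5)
Your proposal is correct and follows essentially the same route as the paper: the forced pointwise definition of $\phi$, well-definedness via the fact that every initial-open set is of the form $Y^{-1}(W)$ (so points identified by $Y$ are topologically indistinguishable under $X$, and $T_0$ upgrades this to equality), and continuity from the same identity $\phi^{-1}(V) = W \cap Y(\Omega)$. The only cosmetic differences are that you argue indistinguishability directly where the paper uses a contradiction with a separating open set, and that you spell out the (trivial) uniqueness step, which the paper's proof leaves implicit.
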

\begin{proof}
  Let $\omega \in \Omega$, $y = Y(\omega)$, $x = X(\omega)$,
  and define $\phi (y) = x$.
  It must be demonstrated that this gives a well-defined continuous $\phi$.
  Assume that $y = Y(\omega') = y'$.
  It must be proved that
  $x' = X(\omega') = x$.
  Assume for contradiction that $x \neq x'$.
  From $T_0$-separation there exists an open separating $U$.
  Assume without loss of generality that
  $x \in U \not\ni x'$.
  It follows that $\omega' \not\in (X \in U) = (Y \in V)$ so
  $y' \not\in V$ which contradicts $y \in V$ from the assumption $y=y'$.
  Existence of an open $V$ such that $(X \in U) = (Y \in V)$
  follows since $X$ is continuous with respect to the initial
  topology of $Y$.
  This also gives $\phi^{-1} (U) = V \cap Y(\Omega)$ which proves
  continuity of $\phi$.  
\end{proof}
\vekk{
A more general version of the above is given by assuming that
the image $X(\Omega)$ is a $T_0$ space in the relative topology inherited
from the topology of $\Omega_X$.
}
The previous result can also be
proved for more general cases,
including in particular spaces equipped with
the family of co-zero sets from suitable families of
real valued functions \citep{Taraldsen17nonlinProb}.
This includes the case where
$X: \Omega \into \Omega_X$ and
$Y: \Omega \into \Omega_Y$ are measurable functions between measurable spaces
\citep{HALMOS,DUNFORD,RUDIN}.
The simplicity of the following result -
and the fact that it seems to be missing from the standard
presentations linked to conditional expectation
\citep{HALMOS,DOOB,LOEVE,Kallenberg02probability,RaoSwift06probability} -
was part of the original motivation for writing this note.
It should be noted that the function $\phi$ obtained
from the Lemma is only defined on the image $Y(\Omega)$,
and not on the whole set $\Omega_Y$.
The reward is a more general statement - and a simpler proof. 
\begin{Lemma}[Measurable Doob-Dynkin]
  \label{LDoobDynkin}
\vekk{  Assume that
  $X: \Omega \into \Omega_X$ and
  $Y: \Omega \into \Omega_Y$ are measurable where
  the measurable sets in $\Omega_X$ separates points.}
If the image $X(\Omega)$ is $T_1$ and $X$ is measurable with respect to the initial $\sigma$-field of $Y$,
then there exists a unique measurable 
$\phi: Y(\Omega) \into \Omega_X$ such that $X = \phi (Y)$.
\end{Lemma}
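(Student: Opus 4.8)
The plan is to follow the template of the Topological Doob-Dynkin proof, with open sets replaced by measurable sets and the initial topology replaced by the initial $\sigma$-field $\sigma(Y) = \{Y^{-1}(B) : B \text{ measurable in } \Omega_Y\}$. The codomain $Y(\Omega)$ is to be equipped with the trace $\sigma$-field $\{B \cap Y(\Omega) : B \text{ measurable in } \Omega_Y\}$, which is a genuine $\sigma$-field even though $Y(\Omega)$ need not be measurable in $\Omega_Y$. I note at the outset that since $\sigma$-fields are closed under complementation, the point-separation supplied by the $T_1$ hypothesis is automatically symmetric; a single measurable set separating a given pair of points will suffice, so the argument would in fact already work from a $T_0$-type separation.

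First I would define $\phi$ pointwise: for $y \in Y(\Omega)$ pick any $\omega$ with $Y(\omega) = y$ and set $\phi(y) = X(\omega)$. Uniqueness is then immediate and uses no hypothesis beyond the defining relation, exactly as in the topological case: any $\psi$ with $X = \psi \circ Y$ must satisfy $\psi(y) = \psi(Y(\omega)) = X(\omega) = \phi(y)$ for every $y = Y(\omega) \in Y(\Omega)$.

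The crux is well-definedness, and this is where both hypotheses are consumed. Suppose $Y(\omega) = Y(\omega')$ but, for contradiction, $x = X(\omega) \neq x' = X(\omega')$. Since $X(\Omega)$ is $T_1$, there is a measurable $A \subseteq \Omega_X$ with $x \in A$ and $x' \notin A$. Because $X$ is measurable with respect to $\sigma(Y)$, there is a measurable $B \subseteq \Omega_Y$ with $X^{-1}(A) = Y^{-1}(B)$. Then $\omega \in X^{-1}(A) = Y^{-1}(B)$ gives $Y(\omega) \in B$, while $\omega' \notin X^{-1}(A) = Y^{-1}(B)$ gives $Y(\omega') \notin B$, contradicting $Y(\omega) = Y(\omega')$.

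Measurability of $\phi$ falls out of the same representation. For measurable $A \subseteq \Omega_X$, writing $X^{-1}(A) = Y^{-1}(B)$, I would verify $\phi^{-1}(A) = B \cap Y(\Omega)$ by a direct point chase: for $y = Y(\omega)$ one has $\phi(y) \in A \Leftrightarrow X(\omega) \in A \Leftrightarrow \omega \in Y^{-1}(B) \Leftrightarrow y \in B$. Hence $\phi^{-1}(A)$ lies in the trace $\sigma$-field, as required. I expect no serious obstacle beyond the separation step above; the only place demanding care is the bookkeeping with the trace $\sigma$-field, where one must resist assuming that $Y(\Omega)$ is itself measurable — it need not be, which is precisely why $\phi$ is obtained only on the image $Y(\Omega)$ rather than on all of $\Omega_Y$.
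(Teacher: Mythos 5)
Your proposal is correct and follows essentially the same route as the paper: pointwise definition of $\phi$, well-definedness via a measurable separating set pulled back through the identity $X^{-1}(A) = Y^{-1}(B)$ supplied by the initial $\sigma$-field, and measurability from $\phi^{-1}(A) = B \cap Y(\Omega)$ in the trace $\sigma$-field. Even your side remark that closure under complementation makes $T_0$-type separation suffice is exactly the observation the paper makes in its own (terser) proof.
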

\begin{proof}
  The proof is identical with the topological version,
  but the sets $U$ and $V$ in the identity
  $\phi^{-1} (U) = V \cap Y(\Omega)$ are measurable.
  Furthermore, $T_1$ and $T_0$ separation are equivalent since the complement of a measurable set
  is measurable.
\end{proof}

The $T_0$ separation assumption seems to be the natural general assumption
for the proof presented here.
Consideration of the trivial topology $\{\emptyset, \Omega_X\}$ gives as a result that all
functions $X$ are continuous with respect to any given function $Y$,
and in particular with respect to a constant function $Y(\omega) = y_0$.
If $X$ takes at least two values, then it is impossible
to find a $\phi$ such that $X = \phi (Y)$.
A continuation of this argument gives separation assumptions that are not only sufficient,
but also necessary.
The argument in the example holds also for the case of measurable spaces.

Consider next the case where $(\Omega, \cE)$ is a measurable space equipped with
a $\sigma$-finite measure $\pr$:
There are measurable $B_1, B_2, \ldots$ with $\pr (B_k) < \infty$ and
$\Omega = \cup_k B_k $.
This includes the common case of a probability space $\Omega$,
but includes also the case of a Renyi space \citep{RENYI} as needed in a theory of statistics
that includes improper priors
\citep{TaraldsenLindqvist10ImproperPriors,TaraldsenLindqvist16condprob,TaraldsenTuftoLindqvist17improperPosterior}.

The space $(\Omega_Y, \cF_Y)$ is assumed to be a measurable space,
and it becomes a measure space when equipped with the law $\pr_Y$ of
a measurable $Y: \Omega \into \Omega_Y$.
The law is defined by
$\pr_Y (A) = \pr (Y \in A) = \pr \{\omega \st Y(\omega) \in A\}$ for
$A \in \cF_Y$,
and $Y$ is said to be $\sigma$-finite if $\pr_Y$ is $\sigma$-finite.

The initial $\sigma$-field $\cE_Y$ of $Y$ is given
by $\cE_Y = \{Y^{-1} (A) \st A \in \cF_Y\}$.
The following result can be used as a substitute
for the use of the Doob-Dynkin Lemma in the context of conditional expectation,
and represents the second main motivation for writing this note.
It gives an alternative approach to the one
usually followed in standard texts \citep{HALMOS,DOOB,RaoSwift06probability},
and the proof is again much shorter.
It should in particular be observed that the resulting measurable $\phi$
is defined not only on the image $Y(\Omega)$ as in Lemma~\ref{LDoobDynkin},
but on the whole space $\Omega_Y$.
The space $\Omega_X$ from Lemma~\ref{LDoobDynkin} is in the below replaced by the extended real interval
$[0,\infty]$ equipped with the Borel sets generated by the open intervals.
\begin{Lemma}[Conditional expectation Doob-Dynkin]
  \label{LKolmogorov}
  Let $\Gamma: \Omega \into [0,\infty]$ and $Y: \Omega \into \Omega_Y$  be measurable.
  If $Y$ is $\sigma$-finite with initial $\sigma$-field $\cE_Y$,
then there exists
a unique (a.e.) measurable
$\phi: \Omega_Y \into \Omega_\Gamma$ such that
$\E (\Gamma \st \cE_Y) = \phi (Y)$.
\end{Lemma}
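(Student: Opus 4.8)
The plan is to construct $\phi$ directly on the image space $\Omega_Y$ by the Radon--Nikodym theorem, which is precisely the Kolmogorov route to conditional expectation suggested by the label of the Lemma. First I would introduce two positive measures on $(\Omega_Y,\cF_Y)$: the law $\pr_Y(A) = \pr(Y \in A)$, which is $\sigma$-finite by the hypothesis that $Y$ is $\sigma$-finite, and the measure $\nu(A) = \int_{Y^{-1}(A)} \Gamma \, d\pr$. Since $\Gamma \ge 0$ is measurable, $\nu$ is a well-defined positive measure on $\cF_Y$.

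Next I would verify $\nu \ll \pr_Y$: if $\pr_Y(A) = 0$ then $\pr(Y^{-1}(A)) = 0$, and the integral of the nonnegative $\Gamma$ over a $\pr$-null set vanishes, so $\nu(A) = 0$. The Radon--Nikodym theorem then supplies a measurable $\phi: \Omega_Y \into [0,\infty]$, unique up to $\pr_Y$-a.e. equality, with $\nu(A) = \int_A \phi \, d\pr_Y$ for every $A \in \cF_Y$.

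It remains to check that $\phi(Y)$ is the conditional expectation. Any $E \in \cE_Y$ has the form $E = Y^{-1}(A)$ with $A \in \cF_Y$, and the image-measure (change of variables) formula gives $\int_E \phi(Y)\,d\pr = \int_A \phi \, d\pr_Y = \nu(A) = \int_E \Gamma \, d\pr$. Since $\phi$ is $\cF_Y$-measurable and $\cE_Y = Y^{-1}(\cF_Y)$, the composition $\phi(Y)$ is $\cE_Y$-measurable; together with the averaging identity this says precisely that $\phi(Y)$ is a version of $\E(\Gamma \st \cE_Y)$. Uniqueness transfers back from $\Omega_Y$ to $\Omega$: if $\phi_1(Y) = \phi_2(Y)$ holds $\pr$-a.e., then $A = \{\phi_1 \neq \phi_2\}$ satisfies $\pr_Y(A) = \pr(Y^{-1}(A)) = 0$, so $\phi$ is unique $\pr_Y$-a.e.

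The main obstacle is the measure-theoretic bookkeeping in the Radon--Nikodym step rather than anything structural. Because $\Gamma$ is only assumed nonnegative, its integral may be infinite and $\nu$ need not be $\sigma$-finite, so the classical form of the theorem requiring both measures $\sigma$-finite does not apply verbatim. What rescues the argument is that the dominating measure $\pr_Y$ is $\sigma$-finite, which is enough to produce a density valued in the extended half-line $[0,\infty]$ — and this is exactly why $\Omega_\Gamma$ was taken to be $[0,\infty]$ rather than $[0,\infty)$. One should therefore invoke the version of Radon--Nikodym that permits $[0,\infty]$-valued derivatives under a $\sigma$-finite dominating measure, or equivalently decompose $\Omega_Y$ into pieces of finite $\pr_Y$-measure and split off the set on which every subset of positive $\pr_Y$-measure carries infinite $\nu$-mass, assigning $\phi = \infty$ there.
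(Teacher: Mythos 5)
Your proof is correct and takes essentially the same route as the paper: apply the Radon--Nikodym theorem to the measure $A \mapsto \E(\Gamma \cdot (Y \in A))$, which is absolutely continuous with respect to the $\sigma$-finite law $\pr_Y$, then pull the density back via the change-of-variables theorem to identify $\phi(Y)$ with $\E(\Gamma \st \cE_Y)$ and transfer uniqueness. Your closing observation that $\nu$ need not be $\sigma$-finite, so one needs the $[0,\infty]$-valued-density version of Radon--Nikodym under a $\sigma$-finite dominating measure, is a worthwhile refinement of a point the paper passes over with its bare citation, but it does not alter the approach.
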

\begin{proof}
The Radon-Nikodym theorem \citep[p.121]{RUDIN}
gives a unique (a.e.) $\phi$ such that
$\E (\Gamma \cdot (Y \in A)) = \int_A \phi (y) \, \pr_Y (dy)$
for all measurable $A$ since the left-hand side defines
a measure which is absolutely continuous with respect to
$\pr_Y$ which is assumed to be $\sigma$-finite.
The general change-of-variables theorem gives then
$\E (\Gamma \cdot (Y \in A)) = \int_{(Y \in A)} \phi (Y) \, \pr (d\omega)$
which gives the claim
$\E (\Gamma \st \cE_Y) = \phi (Y)$.
\end{proof}
It should be noted that
\citet[p.53]{KOLMOGOROV} defines the conditional
expectation $\E^y (\Gamma) = \E (\Gamma \st Y=y) = \phi (y)$
directly as given by the above proof.
Later writers, such as \citet[p.17-18]{DOOB},
defines first a conditional expectation $\E (\Gamma \st \cF)$,
and then $\E (\Gamma \st Y) = \E (\Gamma \st \cE_Y)$ as a special case.
The Doob-Dynkin Lemma is then needed to finally define
$\E^y (\Gamma) = \E (\Gamma \st Y=y) = \phi (y)$.
The advantage of the original approach of Kolmogorov is that,
as in the above proof,
existence and uniqueness (a.e.) of  $\phi(y) = \E^y (\Gamma)$
is proved directly without having to refer to a Doob-Dynkin type Lemma.

The result in Lemma~\ref{LKolmogorov} is generalized directly to the setting where
$\Omega_X = [0,\infty]$ is replaced by a separable Banach space
$\Omega_X$ by duality and decomposition of a complex number in four
unique components in $[0,\infty)$.
An even more general version follows also as a consequence by an alternative argument.
\begin{Lemma}[a.e. Doob-Dynkin]
  \label{LDoobDynkinSigma}
Let $\Omega$ be a $\sigma$-finite measure space.
If $X(\Omega)$ is contained in a standard Borel space
and $X$ is measurable with respect to the initial $\sigma$-field of
a $\sigma$-finite $Y$,
then there exists a unique (a.e.) measurable 
$\phi: \Omega_Y \into \Omega_X$ such that $X = \phi (Y)$.
\end{Lemma}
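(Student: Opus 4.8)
The plan is to deduce this from the conditional-expectation version, Lemma~\ref{LKolmogorov}, by exploiting the defining structural property of a standard Borel space. Recall that any standard Borel space $S$ is Borel isomorphic to a Borel subset of the unit interval; I would fix such an isomorphism $\iota$ of $S$ onto its image $B = \iota(S) \subseteq [0,1]$, so that both $\iota$ and $\iota^{-1}: B \into S$ are measurable. The driving observation is that a map which is already measurable with respect to $\cE_Y$ equals its own conditional expectation, so that Lemma~\ref{LKolmogorov} collapses into exactly the factorization we are after.

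Concretely, I would set $\Gamma = \iota \circ X : \Omega \into [0,1] \subseteq [0,\infty]$. Since $X$ is measurable with respect to $\cE_Y$ and $\iota$ is Borel, $\Gamma$ is $\cE_Y$-measurable, and since $\pr$ restricted to $\cE_Y$ is $\sigma$-finite (the sets witnessing $\sigma$-finiteness of $\pr_Y$ pull back under $Y$), this forces $\E(\Gamma \st \cE_Y) = \Gamma$ almost everywhere. Lemma~\ref{LKolmogorov} then supplies a measurable $\psi: \Omega_Y \into [0,\infty]$, unique almost everywhere, with $\Gamma = \psi(Y)$ almost everywhere. The remaining task is to transport $\psi$ back through $\iota^{-1}$. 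Writing $N = \{y \in \Omega_Y \st \psi(y) \notin B\}$, which is measurable because $B$ is Borel, the identity $\psi(Y) = \Gamma = \iota(X) \in B$ almost everywhere shows that $Y^{-1}(N)$ sits inside the null set where $\psi(Y) \neq \iota(X)$, whence $\pr_Y(N) = 0$. I would then put $\phi = \iota^{-1} \circ \psi$ on the complement of $N$ and let $\phi$ equal a fixed point of $S$ on $N$; this $\phi$ is measurable into $S \subseteq \Omega_X$, and on the almost-sure event where $\psi(Y) = \iota(X)$ one reads off $\phi(Y) = \iota^{-1}(\iota(X)) = X$.

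Uniqueness would follow by composing with $\iota$: if $X = \phi_1(Y) = \phi_2(Y)$ almost everywhere, then $\iota \circ \phi_1$ and $\iota \circ \phi_2$ are two almost-everywhere representatives of $\psi$, hence equal $\pr_Y$-almost everywhere by the uniqueness in Lemma~\ref{LKolmogorov}, and injectivity of $\iota$ gives $\phi_1 = \phi_2$ almost everywhere. The main obstacle is concentrated in the very first step: it is the hypothesis that $X(\Omega)$ lies in a standard Borel space that licenses the Borel isomorphism $\iota$ onto a Borel subset of the line, and thereby the reduction to the scalar Lemma~\ref{LKolmogorov}. This is exactly the point where descriptive set theory is needed, and without such a structural assumption the factorization can fail even for an $\cE_Y$-measurable $X$.
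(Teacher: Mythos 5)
Your proposal is correct and follows essentially the same route as the paper's own proof: reduce to the scalar case via the Borel isomorphism theorem for standard Borel spaces, then apply Lemma~\ref{LKolmogorov} together with the observation that an $\cE_Y$-measurable map equals its own conditional expectation. You merely spell out details the paper's two-line proof leaves implicit (transporting back through $\iota^{-1}$, the null set $N$ where $\psi$ escapes $B$, and the uniqueness argument), all of which are handled correctly.
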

\begin{proof}
The characterization theorem of standard Borel spaces \citep[p.90]{KECHRIS}
shows that it is sufficient to consider $\Omega_X = [0,\infty]$.
The assumptions and Lemma~\ref{LKolmogorov}
give $X = \E (X \st \cE_Y) = \phi (Y)$.
\end{proof}
A standard Borel space is a space equipped with
the $\sigma$-field of a separable complete metric space.
A measurable set of a standard Borel space is also a standard Borel space
\citep[p.75]{KECHRIS}.
The previous Lemma can be generalized by an alternative proof
which is essentially the proof given by \citet[p.603]{DOOB}
for a less general statement.
\begin{Lemma}[Standard Doob-Dynkin]
  \label{LDoobDynkinStandard}
If $X(\Omega)$ is contained in a standard Borel space
and $X$ is measurable with respect to the initial $\sigma$-field of
a measurable $Y$,
then there exists a measurable 
$\phi: \Omega_Y \into \Omega_X$ such that $X = \phi (Y)$.
\end{Lemma}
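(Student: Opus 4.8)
The plan is to give a measure-free argument, since the hypotheses here drop both the $\sigma$-finiteness of the ambient space and the uniqueness conclusion, so the Radon--Nikodym route of Lemma~\ref{LKolmogorov} is unavailable and $\phi$ must be built by hand. As in the proof of Lemma~\ref{LDoobDynkinSigma}, I would first invoke the characterization theorem of standard Borel spaces \citep[p.90]{KECHRIS} to reduce to the case $\Omega_X = [0,\infty]$: a Borel isomorphism $\iota$ of (the standard Borel space containing) $X(\Omega)$ onto a Borel subset $B \subseteq [0,\infty]$ turns the problem into factoring the $[0,\infty]$-valued map $\iota \circ X$ through $Y$, and any factorization $\psi$ of $\iota \circ X$ is converted back into the desired $\phi$ by composing with $\iota^{-1}$ on a suitable measurable set. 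The $\sigma$-finiteness and a.e.\ qualifiers disappear because nothing in this construction refers to a measure.

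For the core case of an $[0,\infty]$-valued $X$ that is measurable with respect to $\cE_Y$, I would use the standard layer-cake approximation. First, for an indicator $X = \mathbf{1}_E$ with $E \in \cE_Y$, write $E = Y^{-1}(A)$ with $A \in \cF_Y$ and set $\phi = \mathbf{1}_A$, so that $X = \phi(Y)$; by linearity this factors every $\cE_Y$-simple function through $Y$ with a $\cF_Y$-measurable $\phi$. Then take the canonical dyadic simple functions $X_n \uparrow X$; each $X_n$ is $\cE_Y$-measurable because $X$ is, hence each admits a factorization $X_n = \phi_n(Y)$ with $\phi_n$ defined on all of $\Omega_Y$. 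Setting $\phi = \limsup_n \phi_n$ produces a $\cF_Y$-measurable function defined at every point of $\Omega_Y$, and for each $\omega$ the relation $\phi_n(Y(\omega)) = X_n(\omega) \to X(\omega)$ forces $\phi(Y(\omega)) = X(\omega)$, which is exactly $X = \phi(Y)$.

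Two points deserve care, and the second is where the real work sits. The first is that $\phi$ must be defined on all of $\Omega_Y$, not merely on $Y(\Omega)$ as in Lemma~\ref{LDoobDynkin}; this is automatic here, since $\limsup_n \phi_n$ is defined everywhere, the convergence argument forcing $\phi(Y) = X$ only through the values of $\phi_n$ along the image $Y(\Omega)$. The second, and the main obstacle, is the reduction step: on passing back through $\iota$ one must guarantee that $\psi$ can be corrected to take values inside $\iota(X(\Omega)) \subseteq B$ while staying measurable, even though $Y(\Omega)$ itself need not be a measurable subset of $\Omega_Y$. The fix is to note that $\psi(Y(\omega)) = \iota(X(\omega)) \in B$ for every $\omega$, so $C := \psi^{-1}(B)$ is a $\cF_Y$-measurable set with $Y(\Omega) \subseteq C$; defining $\phi = \iota^{-1} \circ \psi$ on $C$ and constant off $C$ then keeps $\phi$ measurable and valued in $\Omega_X$ while leaving $\phi(Y) = X$ intact. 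This is, in essence, the argument of \citet[p.603]{DOOB} carried out at the stated level of generality.
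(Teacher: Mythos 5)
Your proof is correct and follows essentially the same route as the paper: reduce to $\Omega_X = [0,\infty]$ via the Borel-isomorphism characterization of standard Borel spaces, factor a monotone sequence of simple approximations through $Y$, and pass to a pointwise limit defined on all of $\Omega_Y$ (your $\limsup_n \phi_n$ versus the paper's $\sup_n \phi_n$, equivalent along the image). The only differences are cosmetic and in your favor on rigor: you factor simple functions by linearity over indicator preimages where the paper disjointifies a countable $T_1$-separated image into a partition of $\Omega_Y$, and you spell out the correction $C = \psi^{-1}(B) \supseteq Y(\Omega)$ needed to transport the factorization back through $\iota^{-1}$ while keeping $\phi$ measurable and $\Omega_X$-valued, a step the paper's one-line reduction leaves implicit.
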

\begin{proof}
Assume first that $X (\Omega) = \{x_1, x_2, \ldots\}$ is $T_1$
in the relative $\sigma$-field from $\Omega_X$.
The separation assumption gives a countable partition 
$\Omega_X = \uplus_k U_k$ such that $U_k \cap X(\Omega) = \{x_k\}$.
The $\cE_Y$ measurable $X$ gives then $V_k$ such that
$(X \in U_k) = (Y \in V_k)$.
The partition $\Omega = \uplus_k (X \in U_k)$
corresponds to a partition $Y(\Omega) = \uplus_k (Y(\Omega) \cap V_k)$.
Each $V_k$ can be replaced by $V_k \setminus (\cup_{l \neq k} V_l)$
so it can be assumed that the $V_1, V_2, \ldots$ are disjoint
and then that $\Omega_Y = \uplus_k V_k$.
The required $\phi$ can finally be defined by $\phi (y) = x_k$ for
$y \in V_k$.
  
The characterization theorem of standard Borel spaces \citep[p.90]{KECHRIS}
shows that it is sufficient to consider $\Omega_X = [0,\infty]$ for the general case.
For this case it follows then that
$X = \lim_n X_n$ for a monotone increasing sequence of
simple functions $X_n$ that are all $Y$ measurable.
The above argument gives $\phi_n$ such that
$X_n = \phi_n (Y)$,
and $\phi (y) = \sup_n \phi_n (y)$ gives the claim.
\end{proof}
Lemma~\ref{LDoobDynkin} does not provide a measurable
$\phi: \Omega_Y \into \Omega_X$ since
the image $Y(\Omega)$ may fail to be measurable.
If, however, it is assumed that $X(\Omega)$ is contained in a standard Borel space,
then the Kuratowski extension theorem \citet[p.73]{KECHRIS}
ensures that there exists a measurable extension
$\tilde{\phi} : \Omega_Y \into \Omega_X$ of $\phi$.
It follows hence that Lemma~\ref{LDoobDynkin} combined with the
Kuratowski extension theorem gives an alternative proof of Lemma~\ref{LDoobDynkinStandard}.
Alternatively, Lemma~\ref{LDoobDynkinStandard}, can be used to obtain a proof
of the Kuratowski extension theorem.

A natural question next:
Is it possible to generalize Lemma~\ref{LDoobDynkinSigma} by relaxing the
conditions on $X(\Omega)$?
This would then also give an alternative to the Kuratowski extension
theorem for the case where $\Omega_Y$ is a $\sigma$-finite measure space.
Lemma~\ref{LDoobDynkinSigma} provides
a $\phi^* : \Omega_Y \into \Omega_X$ that extends the unique
$\phi: Y(\Omega) \into \Omega_X$ in the sense that
$\phi^* = \phi$ almost everywhere on $Y(\Omega)$.
This is a weaker result than the Kuratowski extension theorem.
The following argument gives, unfortunately,
only an alternative proof of Lemma~\ref{LDoobDynkinSigma}.

Assume that $X(\Omega)$ is contained in a space that contains a 
family $\psi_1, \psi_2, \ldots$ of indicator functions that separates
points and generates the $\sigma$ field.
Lemma~\ref{LKolmogorov} gives $\psi_k (X) = \E (\psi_k (X) \st Y) = \phi_k (Y)$
and this determines uniqueness of a measurable $\phi^* (y)$ from
$\psi_k (\phi^* (y)) = \phi_k (y)$ for all $k$ for
$y \in \cap_k D (\phi_k) = D(\phi^*)$.
The $D(\phi_k)$ are chosen such that
$\phi = \phi^*$ on $Y(\Omega) \cap D(\phi^*)$ and
$\pr_Y (D(\phi^*)^c) = 0$.
Unfortunately,
completeness is here needed to ensure existence of $\phi^* (y)$,
and the result is hence only an alternative proof of
Lemma~\ref{LDoobDynkinSigma}.

An alternative attempt is to consider a generalization of
the Kuratowski extension theorem via an extension of
Lemma~\ref{LKolmogorov} to the case of
a possibly non-separable Hilbert space.
It gives a measurable $<f,\phi^* (y)>$,
but the  problem is that the good domain of $\phi^*$ will depend on $f$.
It is only separability that  ensures existence of a
countable family of vectors $f$ that can determine $\phi^*$ on a good domain $D^*$:
$\phi = \phi^*$ on $Y(\Omega) \cap D^*$ where
$\pr_Y (\Omega_Y \setminus D^*) = 0$.
The conclusion is that neither the completeness nor the separability assumptions are easily removed
even when relaxing the requirements in the Kuratowski extension theorem
into an almost everywhere statement.
It is possible that a version can be obtained by completing the
$\sigma$-field $\cF_Y$, but we leave this question open.

\vekk{
If $x$ is real, or more generally vector valued,
then the expectation value 
for a given model parameter $\theta$ is 
\begin{equation}
\label{eqChange}
\E^\theta (X) = \int X(\omega)\,\pr^\theta (d\omega) 
= \int \phi(Y(\omega))\,\pr^\theta (d\omega) 
\overset{*}{=} \int \phi(y)\,\pr^\theta_Y (dy) 
\end{equation}
\citet[p.613]{SCHERVISH} refers to this as the law of the unconscious statistician,
because it is easy to forget that the right-hand side is not the definition of expected value.
The last equality $\overset{*}{=}$ is the general change-of-variables theorem.

The conditional expectation 
is defined as any $\cF$ measurable random variable 
$\E (S \st \cF)$ with
\begin{equation}
\label{eqCond}
\E (S F) = \E (\E (S \st \cF) F), \;\; \forall F \in \cF
\end{equation}
%
The conditional expectation 
$X = \E (S \st Y)$ is defined by
$X = \E (S \st \sigma(Y))$ where  
$\sigma (Y)$ is the initial sigma field of $Y$.
The variable $X$ is then measurable with respect to
$Y$ in the sense of being measurable with
respect to the initial $\sigma$-field of $Y$. 

Assume more generally that a 
random quantity $X$ is  measurable with respect to $Y$.
Motivated by conditional expectation,
\citet[Theorem 1.5 on p.603]{DOOB} proves then
existence of a measurable $\phi$ with $X = \phi (Y)$
as in the Diagram~(\ref{eqComm})
in the case where $\Omega_X = \RealN$ and $\Omega_Y = \RealN^n$.
This is of fundamental importance since it is used
to define $\E (S \st Y=y) = \phi (y)$.

Consider more generally that $X$ and $Y$ are given and 
that $X$ is measurable with respect to $Y$.
In Section~\ref{sMeas} we prove existence of a measurable 
$\phi$ so that the commutative diagram 
in equation~(\ref{eqComm}) holds more generally.
This includes in particular the case where $\Omega_X$ is the set of closed subsets in the
plane with no restrictions on $\Omega_Y$.
Random sets \citep{MATH:RACS,MOLCHANOV05} is just one of many possible
examples of objects used for analysis of complex data
in applied statistics and machine learning
and this motivates the stated generalization.
}

\section{Optimal learning from data}
\label{sOptimal}

A statistical model is given by the structure \citep{TaraldsenTuftoLindqvist17improperPosterior}
%
\be{StatModFirst}
\scalebox{1.2}{
\begin{tikzcd}[row sep=normal, ampersand replacement=\&]
\&\Omega_\Theta \arrow[r, "\psi"] \& \Omega_\Gamma  \\
(\Omega, {\cal E}, \pr) \arrow[ur, "\Theta"] 
\arrow[drr, "X" near end] \arrow[dr, "Y"'] 
\arrow[urr, "\Gamma"' near end] \& \& \\
\&\Omega_Y \arrow[r, "\phi"'] \& \Omega_X
\end{tikzcd}
}
\ee
The uncertainty is modeled by the law $\pr$ on
the space $\Omega$ from which an unknown $\omega$ has been drawn.
The model data $y = Y (\omega)$ is observed and the aim is to determine
$\phi (y)$ such that this gives optimal learning
about the focus parameter $\gamma = \Gamma (\omega) = \psi (\theta)$
where $\theta = \Theta (\omega)$ is the unknown model parameter.

Bayesian analysis is given by assuming that
the prior law $\pr_\Theta$ and the conditional data
distribution $\pr_Y^\theta$ is specified,
or more generally that the joint law $\pr_{Y,\Theta}$ is specified.
Optimal learning in the sense of estimating $\gamma$
can be defined by attempting to find
an optimal action $X (\omega) = x = \phi (y)$
that minimizes the Bayes risk \citep[p.11]{BERGER}
\be{qloss}
r = \E \norm{\Gamma - X}^2 = \E \norm{\psi(\Theta) - \phi (Y)}^2
\ee
where it is assumed that $\Omega_X = \Omega_\Gamma$ is a separable Hilbert space.
The assumption $X = \phi (Y)$ means in particular that
$X$ is measurable with respect to the initial $\sigma$-field $\cE_Y$ of $Y$.
If $Y$ is $\sigma$-finite,
then it follows that $L^2 (\cE_Y)$ is a closed subspace of $L^2 (\cE)$
and the projection
\be{eqproj}
X = \E (\Gamma \st \cE_Y) = \phi (Y)
\ee
is the minimizer of the Bayes risk.
Existence of a required $\phi$ follows from Lemma~\ref{LDoobDynkin},
but the other versions of the Doob-Dynkin Lemma can also be used.
It should be observed here that the argument is more general than usual
since the probability space of Kolmogorov has been replaced by
a Renyi space $\Omega$.

The previous includes also the case of the Kalman-Bucy filter as
described in more detail by \citet[p.81-108]{OKSENDAL5th}.
The unknown parameter is then $\Gamma = \Gamma_t$ at a given time $t$,
and the data $Y$ is the observations $Y_s$ for all $0 \le s \le t$
of a stochastic process which is a filtered and noisy version of
$\Gamma_s$ for $0 \le s \le t$.
The optimal solution is again given
by equation~(\ref{eqproj}) \citep[p.83, Theorem 6.1.2]{OKSENDAL5th}. 
The actual calculation for the 1-dimensional Kalman-Bucy filter
involves solving a nonlinear ordinary differential equation which
gives the coefficients of a stochastic differential equation
that determines the solution based on the observations \citep[p.96, Theorem 6.2.8]{OKSENDAL5th}.

The main reason for mentioning the Kalman-Bucy filter is that
it corresponds to a case where both
the model parameter space $\Omega_\Theta$
and the the data space $\Omega_Y$ are infinite dimensional.
They can both in this application be identified with the set of continuous
paths indexed with a time parameter 
\citep[p.22]{OKSENDAL5th},
but for some applications it is more appropriate to use a space
of tempered distributions.
The concept of a random tempered distribution can be further
generalized using the ideas of \citet{Skorohod84}
for strong linear random operators which generalizes
the concept of a random operator.

Consider again the Bayes risk in equation~(\ref{qloss}).
If it is assumed that $Y$ is $\sigma$-finite,
then the following decomposition holds
\be{Yqloss}
r = \E (\E (\norm{\psi(\Theta) - \phi (Y)}^2 \st Y))
= \int r^y \, \pr_Y (dy)
\ee
It follows that the Bayes risk is minimized if the Bayes posterior risk
\be{BPqloss}
r^y = \E (\norm{\psi(\Theta) - \phi (y)}^2 \st Y=y)
\ee
is minimized for each $y$.
This gives the explicit solution
\be{BPqlossEst}
\phi (y) = \E^y \psi(\Theta)
\ee
It should be observed that the Bayes posterior risk can be minimized and
uniformly finite even in cases where the Bayes risk in 
equation~(\ref{qloss}) is infinite.
Minimization of the Bayes posterior loss is hence a more generally
applicable procedure for determining a decision rule
$\phi$ that gives optimal learning.

A simple example is given by $y = \theta + u$ where
$u$ is drawn from a standard one dimensional normal distribution.
If the prior for $\theta$ is Lebesgue measure on the real line,
then the posterior for $\theta$ equals
the fiducial distribution $\theta = y - u$:
The posterior equals a $\normvar (y, 1)$ distribution.
Consider the simplest case where $\gamma = \psi (\theta) = \theta$,
which gives $\phi (y) = y$ from equation~(\ref{BPqlossEst}).
The Bayes posterior risk is then
$r^y = 1$ from equation~(\ref{BPqloss}),
and the Bayes risk $r = \infty$.
The latter follows since the marginal law of $y$ is also
Lebesgue measure on the real line.

This example can be generalized to a general location problem,
including general linear regression,
and even more general kinds of group models.
In the case of an infinite dimensional Hilbert space
$\Omega_Y$ the invariant measure does not exist,
but for this case the fiducial posterior loss can be used
as a substitute for the Bayes posterior loss,
and gives optimal frequentist inference
\citep{TaraldsenLindqvist13fidopt}.

Optimal frequentist inference can be defined as
given by a $\phi$ that minimizes the frequentist risk
\be{BFqloss}
r^\theta = \E^\theta (\norm{\phi (Y) - \gamma}^2) =
\E (\norm{\psi(\theta) - \phi (Y)}^2 \st \Theta = \theta )
\ee
uniformly for each model parameter $\theta$.
The quadratic loss function is here used for simplicity,
and many alternatives exist depending on the kind of inference in particular problems.
Restrictions on the class of allowable functions $\phi$ are commonly given
by demanding unbiasedness or equivariance with respect  to a group action 
\citep{TaraldsenLindqvist13fidopt}.
It follows that an optimal frequentist action $\phi$, if it exists,
will also minimize the Bayes risk since a $\sigma$-finite $\Theta$ ensures
\be{BFqloss}
r = \int r^\theta \, \pr_\Theta (d\theta)
\ee
In many cases, however, there exists no optimal frequentist action.
A good alternative is often given by the optimal Bayesian posterior action
as can be inferred from the previous arguments.
The prior $\pr_\Theta$ is then chosen not based on prior knowledge,
but so that it gives appropriate weight to regions in the model parameter space
that are considered important.

\vekk{
In conventional theory \citep{SCHERVISH} the space $\Omega$
is a probability space.
In the more general setting of a Rényi space $\Omega$ considered here
the underlying law $\pr$ is a conditional probability law with
a corresponding bunch $\cB \subset \cE$.
The law of the data $X$ given the parameter $\Theta = \theta$
is defined by $\pr_X^\theta (A) = \pr (X \in A \st \Theta = \theta)$.
The law of the model parameter $\Theta$ given the data $X=x$
is defined by $\pr_\Theta^x (A) = \pr (\Theta \in A \st X = x)$.
The posterior law $\pr_\Gamma^x$ of a parameter $\gamma = \psi (\theta)$
and the law $\pr_Y^\theta$ of a statistic are determined by this
and equation~(\ref{eqStatModFirst}).

\section{Discussion}
\label{sDisc}

Alternative proofs!

\citet[p.613]{SCHERVISH}

Bayes and updating

Optimal estimator and conditioning

Kolmogorov, QM, Aarnes, Dempster-Shafer

Rain tomorrow a'la Renyi versus Lindley.
Versus coin tossing

\section{Conclusion}

}

\bibliography{bib,gtaralds}
\bibliographystyle{imsart-nameyear}

\end{document}